\documentclass[10pt,a4paper]{article}

\usepackage{amsmath,amssymb,amsthm,amsfonts}
\usepackage{geometry}
\usepackage{hyperref}
\hypersetup{colorlinks=true,linkcolor=blue,citecolor=blue,urlcolor=blue}
\usepackage{mathrsfs}
\usepackage{comment}

\usepackage{ulem}
\numberwithin{equation}{section}

\usepackage[style=numeric-comp,sorting=nyt,backend=biber,maxbibnames=99]{biblatex}
\allowdisplaybreaks[3]

\newtheorem{theorem}{Theorem}[section]
\newtheorem{remark}{Remark}[section]

\newtheorem{lemma}[theorem]{Lemma}
\newtheorem{corollary}[theorem]{Corollary}

\newcommand{\curl}{\mathop{\mathrm{curl}}\nolimits}
\newcommand{\divv}{\mathop{\mathrm{div}}\nolimits}
\newcommand{\dx}{\,dx}
\newcommand{\dt}{\,dt}
\newcommand{\norm}[1]{\left\| #1 \right\|}

\newcommand{\pd}[1]{\partial_{#1}}
\newcommand{\od}[1]{\frac{d}{d #1}}
\newcommand{\Linf}{{L}^{\infty}}
\newcommand{\Ltwo}{{L}^{2}}
\newcommand{\Hone}{H^{1}}
\newcommand{\Htwo}{H^{2}}

\newcommand{\Cone}{C_{1}}
\newcommand{\Ctwo}{C_{2}}
\newcommand{\Cthree}{C_{3, T, M}}
\newcommand{\Cfour}{C_{4, T, M}}
\newcommand{\Cfive}{C_{5, T, M}}

\begin{document}

\title{Conditonal Lipschitz stability for the Inverse Problem of the 2D Navier-Stokes System in a Bounded Domain\\
\medskip
\small{\it Dedicated to the 60th anniversary of Professor Roman Novikov
}}
\author{Jishan Fan\thanks{Department of Applied Mathematics, Nanjing Forestry University, Nanjing 210037, China. Email: \mbox{fanjishan@njfu.edu.cn}} \and
Yu Jiang\thanks{School of Mathematics, Shanghai University of Finance and Economics, Shanghai 200433, China. Email: \mbox{jiang.yu@mail.shufe.edu.cn}} \and
Sei Nagayasu\thanks{Graduate School of Sciences, University of Hyogo, Himeji 671-2201, Japan. Email: \mbox{sei@sci.u-hyogo.ac.jp}} \and
Gen Nakamura\thanks{Department of Mathematics, Hokkaido University, Sapporo 060-0808, Japan, Research Center of Mathematics for Social Creativity, Research Institute for Electronic Science, Hokkaido University, Sapporo 060-0812, Japan. Email: \mbox{nakamuragenn@gmail.com}}}
\date{}
\maketitle

\begin{abstract}
This paper concerns an inverse problem for the initial boundary value problem of the two-dimensional Navier-Stokes system defined in a bounded simply connected domain with slip, vorticity boundary conditions, and a global vorticity invariant constraint. We establish conditional Lipschitz stability and a local recovery for this inverse problem, where the velocity field and space-independent boundary vorticity are locally recovered from the given initial velocity field and the global vorticity invariant. Our analysis is based on well-posedness estimates and energy methods for the vorticity transport equation.
\end{abstract}

\noindent \textbf{Keywords}: uniqueness, conditional stability, 2D Navier-Stokes system, inverse problem, vorticity boundary condition\\
\noindent \textbf{Mathematics Subject Classifications (2020)}: 35Q30, 76D03, 76D05\\
\noindent \textbf{Running Title}: Conditional stability for the 2D Navier-Stokes Inverse Problem

\section{Introduction}
This paper concerns an inverse problem for the initial boundary value problem of the two-dimensional Navier-Stokes system defined in a bounded simply connected domain with slip, vorticity
boundary conditions, and a global vorticity invariant constraint, which is given as
\begin{equation}\label{eq:1.1}
\pd{t} u + u\cdot \nabla u + \nabla p - \Delta u = 0, \quad \divv u = 0 \quad \text{in } \Omega \times (0,T),
\end{equation}
\begin{equation}\label{eq:1.2}
u \cdot n = 0, \quad \omega := \curl u = h(t) \quad \text{on } \partial \Omega \times (0,T),
\end{equation}
\begin{equation}\label{eq:1.4}
u(\cdot, 0) = u_0 \quad \text{in } \Omega,
\end{equation}
\begin{equation}\label{eq:1.3}
\frac{1}{|\Omega|} \int_{\Omega} \omega \dx = L \quad \text{in } (0, T).
\end{equation}
Here, $\Omega \subset \mathbb{R}^2$ with area $|\Omega|$ and smooth boundary $\partial \Omega$ denotes a bounded simply connected domain. We denote the outward unit normal vector of $\partial\Omega$ by $n$. Also, the fluid velocity field $u$ is a column vector $u=(u_1,u_2)^{\mathfrak{t}}$, and we abbreviate $(u\cdot\nabla)u$ as $u\cdot\nabla u$. Further, $p$ and $\omega := \curl u = \partial_1 u_2 - \partial_2 u_1$ describe the pressure field and the scalar vorticity, respectively. The boundary condition in \eqref{eq:1.2} $u \cdot n = 0$ is {the well-known Navier slip boundary condition, a standard rigid-wall impermeability constraint that prevents fluid from penetrating across the boundary.} In contrast, condition $\omega = h(t)$ prescribes boundary-generated vorticity, a quantity that is often unmeasurable or unknown in practical hydrodynamic experiments (see \cite{ref10}). $L$ denotes the global vorticity invariant. \eqref{eq:1.3} determines the spatial average of the vorticity field, a physical invariant associated with total circulation $|\Omega| L$, which is conserved in inviscid 2D flows according to Kelvin's Circulation Theorem (see \cite{saffman1992vortex,chorin1994vorticity,ref8}). This quantity is typically accessible through bulk measurements in experimental flow setups (see \cite{raffel2018particle}).

From the perspective of inverse problems, the mentioned initial boundary value problem is called a forward problem, and its objective is to determine the velocity field $u$ and pressure field $p$ when the boundary condition \eqref{eq:1.2} and initial condition \eqref{eq:1.4} are given. More precisely, the following is known for the unique solvability of the forward problem (see \cite{ref8}).

\begin{theorem}[Forward problem]\label{th:fp}
Let $u_0(x)\in H^1(\Omega)$, $h(t)\in L^\infty(0,T)$, and $h'(t) \in L^2(0,T)$ with $u_0\cdot n=0$, $\curl u_0=h(0)$ on $\partial\Omega$. Then, there exists a unique strong solution $(u,p)$ to \eqref{eq:1.1}--\eqref{eq:1.4} such that
$u \in \Linf(0, T ; \Hone(\Omega)) \cap \Ltwo(0, T ; \Htwo(\Omega))$, $\partial_t u \in \Ltwo(0, T ; \Ltwo(\Omega))$, and $p\in L^2(0,T;H^1(\Omega))$ with $\int_\Omega p \,dx=0$.
\end{theorem}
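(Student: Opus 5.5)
We will prove Theorem~\ref{th:fp} through the vorticity--stream function formulation, which is well suited to the boundary conditions \eqref{eq:1.2}. Since $\Omega$ is simply connected and $\divv u=0$ with $u\cdot n=0$, we can write $u=\nabla^\perp\psi$, where $\psi$ solves the Dirichlet problem $-\Delta\psi=\omega$ in $\Omega$, $\psi=0$ on $\partial\Omega$. Elliptic regularity then gives $\norm{u}_{H^{k+1}}\le C\norm{\omega}_{H^k}$ for $k=0,1$. Taking the curl of \eqref{eq:1.1} yields the transport--diffusion equation
\[
\pd{t}\omega + u\cdot\nabla\omega - \Delta\omega = 0,\qquad \omega=h(t) \text{ on } \partial\Omega,\qquad \omega(\cdot,0)=\curl u_0 .
\]
To handle the nonzero Dirichlet data we set $\theta:=\omega-h(t)$. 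Then $\theta$ vanishes on $\partial\Omega$ and satisfies $\pd{t}\theta+u\cdot\nabla\theta-\Delta\theta=-h'(t)$; here we use $u\cdot\nabla h=0$ because $h$ is independent of $x$. The compatibility condition $\curl u_0=h(0)$ on $\partial\Omega$ gives $\theta(0)\in H^1_0(\Omega)$. The problem is thus reduced to a nonlinear parabolic problem for $\theta$ with homogeneous Dirichlet data, coupled to $u=\nabla^\perp(-\Delta_D)^{-1}(\theta+h)$.

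\textbf{Existence.} We use a Galerkin scheme in the Dirichlet eigenfunctions of $-\Delta$, or alternatively a Schauder fixed point on the map $\bar u\mapsto u$ obtained by solving the linear equation with drift $\bar u$. The key step is a priori estimates.

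First, testing with $\theta$ and using $\divv u=0$, $u\cdot n=0$, we get
\[
\tfrac12\tfrac{d}{dt}\norm{\theta}_{L^2}^2+\norm{\nabla\theta}_{L^2}^2=-h'\int_\Omega\theta\dx .
\]
This gives $\theta\in L^\infty L^2\cap L^2H^1_0$, since $h'\in L^2$.

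Second, we test with $-\Delta\theta$. This produces the trilinear term $\int (u\cdot\nabla\theta)\Delta\theta\dx$. We bound it by Ladyzhenskaya's inequality together with $\norm{u}_{L^4}\le C\norm{\omega}_{L^2}$ and $\norm{\nabla\theta}_{L^4}\le C\norm{\nabla\theta}^{1/2}_{L^2}\norm{\theta}^{1/2}_{H^2}$:
\[
\Big|\int (u\cdot\nabla\theta)\Delta\theta\dx\Big| \le C\norm{\omega}_{L^2}\norm{\nabla\theta}_{L^2}^{1/2}\norm{\Delta\theta}_{L^2}^{3/2}\le \tfrac14\norm{\Delta\theta}_{L^2}^2+C\norm{\omega}_{L^2}^4\norm{\nabla\theta}_{L^2}^2 .
\]
The forcing term is handled similarly, since $|h'\int\Delta\theta\dx|\le \tfrac14\norm{\Delta\theta}_{L^2}^2+C|h'|^2$. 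Gronwall's inequality, using $\norm{\omega}_{L^2}\le\norm{\theta}_{L^2}+C|h|\in L^\infty$ from the first step, then yields $\theta\in L^\infty H^1_0\cap L^2H^2$. The equation gives $\pd{t}\theta\in L^2L^2$.

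These bounds translate back to $u\in L^\infty H^2\cap L^2H^3$, which is more than required, and to $\pd{t} u=\nabla^\perp(-\Delta_D)^{-1}\pd{t}\omega\in L^2H^1$. Passing to the limit in the Galerkin scheme is standard by Aubin--Lions compactness. The pressure is recovered via de Rham's theorem: the quantity $-\pd{t}u-u\cdot\nabla u+\Delta u\in L^2L^2$ is curl-free, because its curl vanishes by the vorticity equation, and $\Omega$ is simply connected. Hence it equals $\nabla p$ with $p\in L^2H^1$, and we normalize $p$ to have zero mean.

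\textbf{Uniqueness and the main obstacle.} For two solutions, the difference $\delta\theta=\omega_1-\omega_2$ has zero boundary data and satisfies
\[
\pd{t}\delta\theta+u_1\cdot\nabla\delta\theta+\delta u\cdot\nabla\omega_2-\Delta\delta\theta=0 .
\]
Testing with $\delta\theta$ and bounding $\int(\delta u\cdot\nabla\omega_2)\delta\theta\dx\le\norm{\delta u}_{L^4}\norm{\nabla\omega_2}_{L^2}\norm{\delta\theta}_{L^4}$ gives
\[
\tfrac{d}{dt}\norm{\delta\theta}_{L^2}^2\le C\norm{\nabla\omega_2}_{L^2}^2\norm{\delta\theta}_{L^2}^2 ,
\]
with $\norm{\nabla\omega_2}_{L^2}^2\in L^1$, so Gronwall gives uniqueness.

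I expect the main obstacle to be the boundary term. The vorticity form loses the boundary information of the velocity equation, so one must verify that the curl of the Navier--Stokes system with $\omega|_{\partial\Omega}=h$ is genuinely equivalent to the velocity problem with $u\cdot n=0$. This equivalence relies on simple connectivity, because it excludes harmonic fields. One must also check that the regularity of $h$ ($L^\infty$ with $h'\in L^2$) suffices for the lifting. Since $h$ is spatially constant, the lifting is trivial, which is exactly why the reduction to $\theta=\omega-h$ works.

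Finally, the constraint \eqref{eq:1.3} is not imposed in the forward problem; it is not needed for Theorem~\ref{th:fp}, which concerns only \eqref{eq:1.1}--\eqref{eq:1.4}.
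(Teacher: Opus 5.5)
\textbf{Gap: the second estimate needs more regularity than the hypotheses give.} Your existence argument fails at the claim that the compatibility condition gives $\theta(0)\in H^1_0(\Omega)$. The hypothesis is only $u_0\in\Hone(\Omega)$, so $\omega_0=\curl u_0\in\Ltwo(\Omega)$. It has no boundary trace, so the condition $\curl u_0=h(0)$ on $\partial\Omega$ has no meaning at this regularity, and $\theta(0)=\omega_0-h(0)$ lies merely in $\Ltwo(\Omega)$.

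Your second estimate, obtained by testing with $-\Delta\theta$, closes by Gronwall only if $\norm{\nabla\theta(0)}_{\Ltwo(\Omega)}<\infty$, so it is not available. Its consequences are false for general data: $\pd{t}\theta\in L^2(0,T;\Ltwo)$, $u\in L^\infty(0,T;\Htwo)\cap L^2(0,T;H^3)$, and $\pd{t}u\in L^2(0,T;\Hone)$. Indeed, $u\in L^\infty(0,T;\Htwo)\cap C([0,T];\Ltwo)$ would force $u_0=u(0)\in\Htwo(\Omega)$. Both your bound on $\pd{t}u$ and the time-derivative bound you need for Aubin--Lions come from this estimate. So the existence proof as written does not cover the data in the statement.

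\textbf{Repair: stay at the $L^2$ level.} This already yields exactly the regularity the theorem asserts. Your first estimate gives $\theta\in L^\infty(0,T;\Ltwo)\cap L^2(0,T;H^1_0)$, hence $u\in L^\infty(0,T;\Hone)\cap L^2(0,T;\Htwo)$ by elliptic regularity. Write $u\cdot\nabla\theta=\divv(u\theta)$; then $u\theta\in L^2(0,T;\Ltwo)$, since $u\in L^\infty(0,T;L^4)$ and $\theta\in L^2(0,T;L^4)$. The equation then gives $\pd{t}\theta\in L^2(0,T;H^{-1})$. This holds uniformly along the Galerkin sequence, because the spectral projections are also orthogonal in $H^{-1}$. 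This is the right pair for Aubin--Lions, and it gives $\theta\in C([0,T];\Ltwo)$. Since $(-\Delta_D)^{-1}$ maps $H^{-1}$ into $H^1_0$, you get $\pd{t}u=\nabla^\perp(-\Delta_D)^{-1}(\pd{t}\theta+h')\in L^2(0,T;\Ltwo)$, as required. Your pressure recovery and your uniqueness argument go through unchanged at this regularity; the uniqueness argument only uses $\nabla\omega_2\in L^2(0,T;\Ltwo)$. Equivalently, you can keep your $H^1$-level estimate for smooth compatible data and pass to the limit using only the $L^2$-level bounds.

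\textbf{Smaller points.} Identifying $u(0)=\nabla^\perp(-\Delta_D)^{-1}\omega_0$ with $u_0$ requires $\divv u_0=0$. The statement omits this hypothesis (it is assumed in Theorem~\ref{thm:1.1}), but it is necessary, and you should say that you use it. Also fix your sign convention: with $\nabla^\perp=(-\partial_2,\partial_1)$ one has $\curl\nabla^\perp\psi=\Delta\psi$, not $-\Delta\psi$.

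For comparison, the paper does not prove this theorem; it only quotes it from the literature. Its Lemma~\ref{lemma:1} uses your same device of testing the equation for $\omega-h$ with $\omega-h$.
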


\medskip
Now, we shift our perspective to the {inverse problem} associated with the 2D Navier-Stokes system \eqref{eq:1.1}--\eqref{eq:1.3}, addressing a distinct and physically meaningful question formulated as follows:

\medskip
\noindent \textbf{Inverse problem}: Given the initial velocity field $u_0$ and the global vorticity invariant $L$, identify the space-independent boundary vorticity $h(t)$, together with the velocity field $u$ and pressure field $p$, such that the coupled system \eqref{eq:1.1}--\eqref{eq:1.3} is satisfied. 
\medskip

Inverse problems for the Navier-Stokes system have been the subject of extensive research over the past few decades. Most studies have focused on recovering unknown quantities, such as initial conditions, boundary forcing terms, or viscosity coefficients from partial observations of the flow field (see \cite{ref11,ref13,ref14}). For the 2D Navier-Stokes system, existing studies on inverse problems mostly restrict themselves to the case where the boundary vorticity is given (see \cite{ ref8}). The inverse problem of recovering the unknown space-independent boundary vorticity $h(t)$ was pointed out in \cite{ref3}. Our research was proposed by one of the authors of the paper and also significantly motivated the coauthors of this paper to work together. We point out that there is a similar inverse problem for the Ginzburg-Landau model, where unknown boundary magnetic terms are identified from the global flux invariant (see \cite{ref1,ref2}).

\bigskip
The primary goal of this paper is to prove the conditional Lipschitz stability estimate for the aforementioned inverse problem of the 2D Navier-Stokes system. This result directly implies the local recovery of the unknown boundary vorticity $h(t)$ from the measurable initial velocity field $u_0$, and the global vorticity is invariant. Our main result is stated as follows:

\begin{theorem}[Conditional Lipschitz stability estimate]\label{thm:1.1} Let $u_{0i} \in \Hone(\Omega)$ satisfy 
$\divv u_{0i} = 0$ in $\Omega$ 
and $u_{0i} \cdot n = 0$ on $\partial \Omega$, and let $( u_i, p_i)$ with $\int_\Omega p_i \, dx = 0$ be the strong solutions to the system \eqref{eq:1.1}--\eqref{eq:1.3} corresponding to the input data $(u_{0i}, L_i)$ 
for $i=1,2$. Assume that $\norm{\omega_{0i}}_{\Ltwo ( \Omega )} \leq M$ ($i=1,2$) holds for a constant $M>0$, 
where $\omega_{0i} = \curl u_{0i}$. 
Then we have the Lipschitz stability estimate:
\begin{equation}\label{eq:1.5t}
\|u_1-u_2\|_{L^\infty(0,T;H^1(\Omega))}
+ \|p_1-p_2\|_{L^2(0,T;H^1(\Omega))}
+ \|h_1-h_2\|_{L^2(0,T)}
\le C \, \norm{\omega_{01} - \omega_{02}}_{\Ltwo(\Omega)}
\end{equation}
for any $T>0$, where $C$ depends only on $\Omega, T$ and $M$.
\end{theorem}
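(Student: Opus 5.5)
The proof is an energy argument for the difference of the vorticities, tested against a function that vanishes on the boundary. This test function removes the unknown boundary values $h_i$. The velocity, pressure and $h$ are then recovered through elliptic estimates.

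\textbf{Step 0 (reductions).} Write $\omega_i=\curl u_i$. Taking $\curl$ of \eqref{eq:1.1} gives the vorticity transport equation
\[
\pd{t}\omega_i+u_i\cdot\nabla\omega_i-\Delta\omega_i=0 \ \text{ in } \Omega\times(0,T),\qquad \omega_i=h_i(t) \ \text{ on } \partial\Omega .
\]
By \eqref{eq:1.3}, $\int_\Omega\omega_i\,dx=|\Omega|L_i$ is constant in time. Evaluating at $t=0$ gives $|\Omega|\,|L_1-L_2|\le|\Omega|^{1/2}\norm{\omega_{01}-\omega_{02}}_{\Ltwo(\Omega)}$. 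So the datum $L_i$ is controlled by the right-hand side of \eqref{eq:1.5t}.

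We use the div--curl estimate $\norm{v}_{\Hone(\Omega)}\le C\norm{\curl v}_{\Ltwo(\Omega)}$. It holds for $\divv v=0$ in $\Omega$, $v\cdot n=0$ on $\partial\Omega$, with $\Omega$ simply connected.

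\textbf{Step 1 (a priori bound).} Test the $\omega_i$-equation with $\omega_i-h_i(t)\in H^1_0(\Omega)$, which is admissible by the regularity in Theorem \ref{th:fp}.
\begin{itemize}
\item The time-derivative term gives $\frac12\od{t}\norm{\omega_i}^2_{\Ltwo}-h_i\od{t}\int_\Omega\omega_i\,dx=\frac12\od{t}\norm{\omega_i}^2_{\Ltwo}$, because $\int_\Omega\omega_i\,dx$ is constant.
\item The convection term equals $\int_\Omega u_i\cdot\nabla(\omega_i-h_i)\,(\omega_i-h_i)\,dx=0$, since $\divv u_i=0$ and $u_i\cdot n=0$.
\item The diffusion term gives $\norm{\nabla\omega_i}_{\Ltwo}^2$, since the boundary term vanishes.
\end{itemize}
Hence
\[
\norm{\omega_i}_{\Linf(0,T;\Ltwo)}^2+2\norm{\nabla\omega_i}^2_{\Ltwo(0,T;\Ltwo)}\le M^2 .
\]
Combined with the div--curl estimate, this bounds $u_i$ in $\Linf(0,T;\Hone)$ and $\omega_i$ in $\Ltwo(0,T;\Hone)$ by constants depending on $\Omega$ and $M$.

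\textbf{Step 2 (key stability estimate).} Set $w=\omega_1-\omega_2$, $v=u_1-u_2$ and $\eta=h_1-h_2$. Then
\[
\pd{t}w+u_1\cdot\nabla w+v\cdot\nabla\omega_2-\Delta w=0,\qquad w=\eta \text{ on } \partial\Omega,
\]
and $\int_\Omega w\,dx$ is constant. Test with $w-\eta\in H^1_0$. Exactly as in Step 1, the first three terms give $\frac12\od{t}\norm{w}^2_{\Ltwo}+\norm{\nabla w}^2_{\Ltwo}$.

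For the coupling term, write $v\cdot\nabla\omega_2=\divv(v\,\omega_2)$ and integrate by parts; the boundary term vanishes since $w-\eta=0$ there. This leaves $-\int_\Omega\omega_2\,v\cdot\nabla w\,dx$. By Ladyzhenskaya's inequality and $\norm{v}_{\Hone}\le C\norm{w}_{\Ltwo}$,
\[
\Bigl|\int_\Omega\omega_2\,v\cdot\nabla w\,dx\Bigr|\le C\norm{\omega_2}_{\Ltwo}^{1/2}\norm{\omega_2}_{\Hone}^{1/2}\norm{w}_{\Ltwo}\norm{\nabla w}_{\Ltwo}\le\frac12\norm{\nabla w}^2_{\Ltwo}+C\norm{\omega_2}_{\Ltwo}\norm{\omega_2}_{\Hone}\norm{w}_{\Ltwo}^2 .
\]
The factor $\norm{\omega_2}_{\Ltwo}\norm{\omega_2}_{\Hone}$ lies in $L^1(0,T)$ by Step 1. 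Gronwall's inequality therefore yields
\[
\norm{w}_{\Linf(0,T;\Ltwo)}+\norm{\nabla w}_{\Ltwo(0,T;\Ltwo)}\le C(\Omega,T,M)\norm{\omega_{01}-\omega_{02}}_{\Ltwo}.
\]
This bounds $\norm{v}_{\Linf(0,T;\Hone)}$ through the div--curl estimate. It also bounds $\eta$, because the trace theorem gives
\[
|\partial\Omega|^{1/2}|\eta(t)|=\norm{w(t)}_{\Ltwo(\partial\Omega)}\le C\norm{w(t)}_{\Hone},
\]
so $\norm{\eta}_{\Ltwo(0,T)}$ is controlled.

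\textbf{Step 3 (pressure).} In two dimensions with $\divv u=0$, we have $\Delta u=-\nabla^\perp\omega$. Hence $\nabla p=-\pd{t}u-u\cdot\nabla u+\nabla^\perp\omega$, and $p$ solves the weak Neumann problem
\[
\int_\Omega\nabla p\cdot\nabla\varphi\,dx=-\int_\Omega(u\cdot\nabla u)\cdot\nabla\varphi\,dx .
\]
The $\pd{t}u$ term drops because $\divv\pd{t}u=0$ and $\pd{t}u\cdot n=0$. The $\nabla^\perp\omega$ term drops because its pairing with $\nabla\varphi$ reduces to a boundary integral of the tangential derivative of $\omega$, which vanishes since $\omega=h(t)$ is constant on $\partial\Omega$. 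Therefore
\[
\norm{\nabla(p_1-p_2)}_{\Ltwo}\le\norm{v\cdot\nabla u_1}_{\Ltwo}+\norm{u_2\cdot\nabla v}_{\Ltwo}\le\norm{v}_{L^4}\norm{\nabla u_1}_{L^4}+\norm{u_2}_{L^4}\norm{\nabla v}_{L^4}.
\]
Using $\norm{\nabla u_1}_{L^4}\le C\norm{\omega_1}_{\Ltwo}^{1/2}\norm{\omega_1}_{\Hone}^{1/2}$, the first term is square-integrable in time. For the second term, use $\norm{u_2}_{L^4}\le C\norm{u_2}_{\Hone}$ and $\norm{\nabla v}_{L^4}\le C\norm{w}_{\Ltwo}^{1/2}\norm{w}_{\Hone}^{1/2}$, which is square-integrable by Step 2. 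Together with $\int_\Omega p_i\,dx=0$ and Poincaré's inequality, this gives the pressure bound in \eqref{eq:1.5t}.

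The main obstacle is Step 2. The unknown and unmeasured boundary values $\eta(t)$ must be eliminated from the energy identity. The test function $w-\eta$, combined with the time-invariance of $\int_\Omega w\,dx$ from \eqref{eq:1.3}, is exactly what removes them. Some care is needed to justify this testing at the strong-solution regularity, for instance by verifying that $h_i\in H^1(0,T)$, or by mollifying in time.
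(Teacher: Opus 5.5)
Your proof is correct, and Steps 1 and 2 are essentially the paper's argument. You test the difference equation with $w-\eta$, kill the $\eta$-term through conservation of $\int_\Omega w\,dx$, integrate the coupling term by parts to $\int_\Omega \omega_2\, v\cdot\nabla w\,dx$, and close with Gronwall using a coefficient in $L^1(0,T)$. Two of your choices are only cosmetic variations: Ladyzhenskaya's inequality for $\|\omega_2\|_{L^4}$ where the paper uses the plain embedding $H^1\subset L^4$, and the trace inequality $|\partial\Omega|^{1/2}|\eta(t)|\le C\|w(t)\|_{H^1}$ where the paper applies Poincar\'e to $w-\eta$.

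The genuine difference is the pressure step.

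\textbf{The paper's route.} It takes the $L^2(\Omega\times(0,T))$ norm of the subtracted momentum equation and uses only the orthogonality $\partial_t v\perp\nabla(p_1-p_2)$. It therefore has to control $\|\Delta v\|_{L^2(0,T;L^2)}$ through the $H^2$ div--curl estimate and $\|w\|_{L^2(0,T;H^1)}$. As a by-product it also bounds $\partial_t v$.

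\textbf{Your route.} You observe that $\Delta u=-\nabla^\perp\omega$ is $L^2$-orthogonal to every gradient, precisely because the boundary vorticity $\omega|_{\partial\Omega}=h(t)$ is spatially constant. Hence $\nabla p$ is just the gradient part of $-u\cdot\nabla u$ in the Helmholtz decomposition, and
\[
\|\nabla(p_1-p_2)\|_{L^2(\Omega)}\le\|u_1\cdot\nabla u_1-u_2\cdot\nabla u_2\|_{L^2(\Omega)} .
\]

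\textbf{Comparison.} Your bound is cleaner and more structural: it exploits the specific vorticity boundary condition and removes the viscous term from the pressure estimate altogether. The paper's argument is more generic, since it never uses the boundary structure for the pressure.

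You do not avoid second-order elliptic estimates, however. The bounds
\[
\|\nabla u_1\|_{L^4}\lesssim\|\omega_1\|_{L^2}^{1/2}\|\omega_1\|_{H^1}^{1/2},\qquad \|\nabla v\|_{L^4}\lesssim\|w\|_{L^2}^{1/2}\|w\|_{H^1}^{1/2}
\]
rely on $\|v\|_{H^2}\lesssim\|\curl v\|_{H^1}$ for divergence-free $v$ with $v\cdot n=0$. You should state this alongside the $H^1$ div--curl estimate in Step 0.

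Your closing remark about justifying the test with $\omega_i-h_i$ at strong-solution regularity is a point the paper passes over silently.
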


\begin{corollary}[Local recovery]
By reformulating the inverse problem as a nonlinear operator equation $F(\xi)=\eta$ with unknowns $\xi=(u,p,h)$ and data $\eta=(u_0,L)$, conditional Lipschitz stability \eqref{eq:1.5t} ensures that the unknowns can be locally recovered via iterative regularization methods such as the Landweber method (see \cite{Kaltenbacher,Hstability}) or the Levenberg-Marquardt Method (see \cite{Ishida}).
\end{corollary}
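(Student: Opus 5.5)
The plan is to deduce the corollary from Theorem~\ref{thm:1.1} through the abstract convergence theory for iterative regularization. The standard fact used is that a Lipschitz stability estimate for the inverse map, together with a Fr\'echet derivative that is Lipschitz continuous, implies the tangential cone condition in a small ball. That condition is exactly the hypothesis under which the Landweber iteration and the Levenberg--Marquardt method converge locally (see \cite{Kaltenbacher,Hstability,Ishida}).

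First I fix the functional setting.
\begin{itemize}
\item The unknown space is $X=\bigl(\Linf(0,T;\Hone(\Omega))\cap\Ltwo(0,T;\Htwo(\Omega))\bigr)\times\Ltwo(0,T;\Hone(\Omega))\times\Ltwo(0,T)$, for $\xi=(u,p,h)$.
\item Since the convergence theories are formulated in Hilbert spaces, I replace the $\Linf$ part by the Hilbert space $\{u\in\Ltwo(0,T;\Htwo(\Omega)):\ \pd{t}u\in\Ltwo(0,T;\Ltwo(\Omega))\}$. This space embeds continuously into $C([0,T];\Hone(\Omega))$, so the left-hand side of \eqref{eq:1.5t} is still controlled.
\item I work on the solution set $\mathcal{S}$ of \eqref{eq:1.1}--\eqref{eq:1.3}, on which Theorem~\ref{th:fp} guarantees the regularity above.
\item On $\mathcal{S}$ I define $F(\xi)=\bigl(u(\cdot,0),\ |\Omega|^{-1}\int_\Omega\curl u\dx\bigr)$, with values in $Y=\Hone(\Omega)\times\Ltwo(0,T)$.
\end{itemize}

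Next I check the two structural hypotheses. The map $F$ is linear and bounded in $\xi$, namely a trace at $t=0$ composed with an average of $\curl u$. The nonlinearity sits entirely in the constraint $\xi\in\mathcal{S}$. Using the strong solution theory, I parametrize $\mathcal{S}$ near a reference solution by its data. The quadratic term $u\cdot\nabla u$ is bilinear and bounded from $(\Ltwo(0,T;\Htwo))^2$ into $\Ltwo(0,T;\Ltwo)$ in 2D, which gives Fr\'echet differentiability with a locally Lipschitz derivative: $\|F'(\xi)-F'(\tilde\xi)\|\le \Lambda\|\xi-\tilde\xi\|_X$ on a ball $B_\rho(\xi^\dagger)$ whose radius depends on $M$.

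Theorem~\ref{thm:1.1} then supplies the stability estimate. Since $\|\omega_{01}-\omega_{02}\|_{\Ltwo}\le C\|u_{01}-u_{02}\|_{\Hone}\le C\|F(\xi_1)-F(\xi_2)\|_Y$, estimate \eqref{eq:1.5t} gives
\[
\|\xi_1-\xi_2\|_X\le C_{T,M}\,\|F(\xi_1)-F(\xi_2)\|_Y
\]
for all $\xi_1,\xi_2\in\mathcal{S}$ with $\|\omega_{0i}\|_{\Ltwo}\le M$.

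The key step combines the two estimates. Taylor's formula gives
\[
\|F(\xi)-F(\tilde\xi)-F'(\xi)(\xi-\tilde\xi)\|_Y\le\tfrac{\Lambda}{2}\|\xi-\tilde\xi\|_X^2\le \tfrac{\Lambda C_{T,M}}{2}\,\|\xi-\tilde\xi\|_X\,\|F(\xi)-F(\tilde\xi)\|_Y .
\]
On a ball of radius $\rho$ this is at most $\Lambda C_{T,M}\rho\,\|F(\xi)-F(\tilde\xi)\|_Y$, so the tangential cone condition holds with constant $\eta=\Lambda C_{T,M}\rho<1/2$ once $\rho$ is small. Together with the uniform bound $\|F'\|\le 1$ (after rescaling) and the discrepancy principle, Hanke--Neubauer--Scherzer type results (as in \cite{Kaltenbacher}) give local convergence of Landweber. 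The same Lipschitz stability yields a linear convergence rate (see \cite{Hstability}), and the analogous results in \cite{Ishida} cover Levenberg--Marquardt.

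The main obstacle is the constraint. Stability only holds on the nonlinear set $\mathcal{S}$, not on an open ball of $X$, while the iterates must stay inside the region where the estimates apply. I would first try to remove the constraint by re-parametrizing. I take the unknown to be the data-to-solution image $\xi=\Phi(u_0,h)$ given by Theorem~\ref{th:fp}, and show that $\Phi$ is Fr\'echet differentiable with a Lipschitz derivative, using energy estimates for the linearized vorticity equation in the same way as the proof of Theorem~\ref{thm:1.1}. The composition $F\circ\Phi$ is then defined on an open set, and the argument above applies to it. If that fails, I would fall back on projected Landweber onto a closed convex set encoding the bound $\|\omega_0\|_{\Ltwo}\le M$, for which \cite{Hstability} still provides convergence.
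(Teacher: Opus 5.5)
The paper gives no argument for this corollary beyond the citations. Your route (Lipschitz stability plus a Lipschitz Fr\'echet derivative gives the tangential cone condition) is presumably what the citations intend, but its key step fails.

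That implication needs $\|\xi-\tilde\xi\|_X\le C\|F(\xi)-F(\tilde\xi)\|_Y$ for \emph{all} $\xi,\tilde\xi$ in an open ball of the parameter space, because the Taylor bound is applied at the iterates. Theorem~\ref{thm:1.1} only compares two exact solutions of the full constrained system. Both means $A_i(t):=|\Omega|^{-1}\int_\Omega\omega_i(t)\dx$ must be constant in time, and the proof uses this to kill the term $|\Omega|(h_1-h_2)\od{t}(A_1-A_2)$ in the energy identity. In your first formulation this is fatal:
\begin{itemize}
\item $F$ is linear on $X$ and does not even depend on $(p,h)$, so no ball $B_\rho(\xi^\dagger)\subset X$ carries any stability estimate.
\item Landweber iterates for this $F$ never see the PDE.
\item Your ``Lipschitz derivative on $B_\rho(\xi^\dagger)$'' conflates $F$ on $X$ with $F$ restricted to the nonlinear set $\mathcal S$.
\end{itemize}
Separately, the embedding of your Hilbert space into $C([0,T];\Hone(\Omega))$ points the wrong way. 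Putting the stronger norm on the left of the stability estimate requires bounds on $\|u_1-u_2\|_{\Ltwo(0,T;\Htwo(\Omega))}$ and $\|\pd{t}(u_1-u_2)\|_{\Ltwo(0,T;\Ltwo(\Omega))}$. These are not stated in \eqref{eq:1.5t}, although the paper's proof does supply them.

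The reparametrization $\xi=\Phi(u_0,h)$ does not rescue the argument. For arbitrary $h$, the second component of $F\circ\Phi$ is the time-dependent mean $A(t)$. You would then need $\|h_1-h_2\|_{\Ltwo(0,T)}\lesssim\|u_{01}-u_{02}\|_{\Hone(\Omega)}+\|A_1-A_2\|_{\Ltwo(0,T)}$ for all $h_i$ near $h^\dagger$. This does not follow from \eqref{eq:1.5t}, and it is false.

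To see this, note that Lipschitz stability on a ball passes to the linearization. At the trivial solution $u\equiv0$ the linearization is $\pd{t}\omega=\Delta\omega$, $\omega|_{\partial\Omega}=h(t)$, $\omega(0)=0$. Writing $\omega=h+w$ gives $A(t)=\int_0^t\kappa(t-s)h(s)\,ds$, where $\kappa(\tau)=-\od{\tau}\bigl(|\Omega|^{-1}\int_\Omega (e^{\tau\Delta_D}1)\dx\bigr)\sim c\,\tau^{-1/2}$ with $c>0$. Here $\Delta_D$ is the Dirichlet Laplacian, applied to the constant function $1$. This is an Abel operator of order $1/2$: taking $h=\sin(nt)$ gives $\|A\|_{\Ltwo(0,T)}\to0$ while $\|h\|_{\Ltwo(0,T)}\not\to0$.

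Your fallback does not help either. Since $u_0$ is data, a convex set encoding $\|\omega_0\|_{\Ltwo}\le M$ is beside the point, and the set cut out by \eqref{eq:1.3} is nonlinear. Note also that $L=|\Omega|^{-1}\int_\Omega\omega_0\dx$ by time continuity of $\omega$. So \eqref{eq:1.5t} is really continuous dependence for a constrained forward problem, and generic noisy data $(u_0^\delta,L^\delta)$ are not attainable at all.

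Closing the gap needs a new stability estimate for $h\mapsto A[u_0,h]$, with $A$ measured roughly half a derivative higher than $h$ (e.g.\ $h\in\Ltwo(0,T)$, $A\in H^{1/2}(0,T)$). It also needs a Lipschitz derivative in matching norms. Nothing in the paper provides either.
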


The remainder of this paper is structured as follows. In Section 2, we first provide preliminary estimates that are essential to the proof of Theorem \ref{thm:1.1}. Section 3 is then devoted to the rigorous proof of Theorem \ref{thm:1.1}.

\section{Preliminary Estimates}

To prove Theorem \ref{thm:1.1}, we first establish a set of preliminary estimates, a standard and essential step in the analysis of nonlinear evolution equations. These estimates show that the recovered boundary vorticity $h(t)$ belongs to the space $\Ltwo(0, T)$, the velocity field $u$ lies in the space $\Linf(0, T ; \Hone(\Omega)) \cap \Ltwo(0, T ; \Htwo(\Omega))$, and the pressure $p$ lies in the space $L^2(0,T;H^1(\Omega))$. These function spaces are natural regularity classes for physically meaningful 2D fluid flows (see \cite{ref7,ref8}). Hereafter, we write $A \lesssim B$ for simplicity of notation to mean that $A \le C B$ with a constant $C>0$ depending only on the domain $\Omega$ 
\begin{lemma}\label{lemma:1}
Let $u_{0} \in \Hone(\Omega)$ satisfy 
$\divv u_{0} = 0$ in $\Omega$ 
and $u_{0} \cdot n = 0$ on $\partial \Omega$,
and let 
$( u, p)$ be the strong solutions to the system 
\eqref{eq:1.1}--\eqref{eq:1.3} corresponding to the input data $(u_{0}, L)$. 
Then for any $T > 0$, we have the following estimates:
\begin{align}
& \norm{\omega}_{\Linf (0,T; \Ltwo (\Omega))}
\leq \norm{\omega_{0}}_{\Ltwo ( \Omega )} , \quad 
\norm{\nabla \omega}_{\Ltwo (0,T; \Ltwo ( \Omega ))}
\leq \frac{1}{\sqrt{2}} \norm{\omega_{0}}_{\Ltwo ( \Omega )} , 
\label{eq:lemma11}\\
&\sup_{0 \leq t \leq T} \norm{u (t)}_{\Hone(\Omega)} 
\lesssim \norm{\omega_{0}}_{\Ltwo(\Omega)} ,\quad
\norm{u}_{\Ltwo(0,T;\Htwo(\Omega))}
\lesssim \max \{ \sqrt{T} , 1 \} \norm{\omega_{0}}_{\Ltwo(\Omega)},
\label{eq:lemma12} \\
& \norm{\pd{t} u}_{\Ltwo (0,T; \Ltwo ( \Omega ) )}
+ \norm{\nabla p}_{\Ltwo (0,T; \Ltwo ( \Omega ) )}\lesssim \max \{ \sqrt{T} , 1 \} \bigl( 
 \norm{\omega_{0}}_{\Ltwo ( \Omega )}^{2}
 + \norm{\omega_{0}}_{\Ltwo ( \Omega )}
\bigr) , 
\label{eq:lemma13} \\
& \norm{h}_{\Ltwo (0,T)}
\lesssim \max \{ \sqrt{T} , 1 \}\norm{\omega_{0}}_{\Ltwo(\Omega)}. 
\label{eq:lemma14}
\end{align}
\end{lemma}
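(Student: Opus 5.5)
The core of the argument is an energy identity for the vorticity equation. Everything else follows from elliptic estimates for the div–curl system and the Helmholtz structure of the momentum equation.

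\textbf{Vorticity energy identity.} Taking $\curl$ of \eqref{eq:1.1} gives the transport–diffusion equation
$$\partial_t\omega+u\cdot\nabla\omega-\Delta\omega=0,\qquad \omega=h(t) \text{ on } \partial\Omega.$$
I will test it with $\omega-h(t)$, which vanishes on $\partial\Omega$ and so lies in $H^1_0(\Omega)$ for a.e.\ $t$.

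\emph{Time derivative.} The time-derivative term is $\int_\Omega\partial_t\omega\,(\omega-h)\,dx=\frac12\frac{d}{dt}\|\omega\|_{L^2}^2-h\frac{d}{dt}\int_\Omega\omega\,dx$. The last term vanishes because \eqref{eq:1.3} makes $\int_\Omega\omega\,dx=|\Omega|L$ constant in time. This is exactly where the global vorticity constraint enters.

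\emph{Convection.} The convection term is $\int u\cdot\nabla(\omega^2/2)-h\int u\cdot\nabla\omega$. Both pieces vanish after integrating by parts, using $\divv u=0$ and $u\cdot n=0$.

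\emph{Diffusion.} The diffusion term gives $\|\nabla\omega\|_{L^2}^2$, and the boundary term drops since $\omega-h=0$ on $\partial\Omega$.

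\emph{Conclusion.} Hence $\frac12\frac{d}{dt}\|\omega\|^2+\|\nabla\omega\|^2=0$. Integrating in time yields both bounds in \eqref{eq:lemma11}, including the constant $1/\sqrt2$.

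\textbf{Velocity and $h$.} Since $\Omega$ is simply connected, the div–curl estimate applies to $\divv u=0$, $\curl u=\omega$, $u\cdot n=0$. It gives $\|u\|_{H^1}\lesssim\|\omega\|_{L^2}$ and $\|u\|_{H^2}\lesssim\|\omega\|_{H^1}\le\|\omega\|_{L^2}+\|\nabla\omega\|_{L^2}$. Squaring and integrating over $(0,T)$ with \eqref{eq:lemma11} gives \eqref{eq:lemma12}, with the factor $\max\{\sqrt T,1\}$.

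For $h$, I will use Poincaré on $\omega-h\in H^1_0$: $\|\omega-h\|_{L^2}\lesssim\|\nabla\omega\|_{L^2}$. Then
$$|\Omega|^{1/2}|h(t)|\le\|\omega\|_{L^2}+\|\omega-h\|_{L^2}\lesssim\|\omega\|_{L^2}+\|\nabla\omega\|_{L^2}.$$
Taking the $L^2(0,T)$ norm gives \eqref{eq:lemma14}.

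\textbf{Time derivative and pressure.} Write $\partial_tu+\nabla p=\Delta u-u\cdot\nabla u$. Since $\divv u=0$, we have $\Delta u=-\nabla^\perp\omega$, so $\|\Delta u\|_{L^2}=\|\nabla\omega\|_{L^2}$. Moreover $\partial_tu$ is divergence free and tangential, so $\int_\Omega\partial_tu\cdot\nabla p\,dx=0$. Therefore
$$\|\partial_tu\|^2+\|\nabla p\|^2=\|\Delta u-u\cdot\nabla u\|^2,$$
and both terms are controlled by $\|\nabla\omega\|+\|u\cdot\nabla u\|$.

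For the nonlinearity, Ladyzhenskaya/Gagliardo–Nirenberg gives $\|u\cdot\nabla u\|_{L^2}\le\|u\|_{L^4}\|\nabla u\|_{L^4}\lesssim\|u\|_{H^1}^{3/2}\|u\|_{H^2}^{1/2}$. Squaring and integrating, then using Cauchy–Schwarz in time with \eqref{eq:lemma12}:
$$\int_0^T\|u\cdot\nabla u\|_{L^2}^2\,dt\lesssim\sup_t\|u\|_{H^1}^3\sqrt T\,\|u\|_{L^2H^2}\lesssim\max\{T,1\}\|\omega_0\|^4.$$
This gives \eqref{eq:lemma13}.

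\textbf{Main obstacle.} I expect the main difficulty to be justifying the energy identity rigorously at the regularity of Theorem~\ref{th:fp}. One must check that $\omega\in L^2(0,T;H^1)$ has trace $h(t)$ and that $\partial_t\omega$ pairs with $\omega-h$ in the $H^{-1}$–$H^1_0$ duality. I would first prove the identity for smooth approximations and then pass to the limit. A secondary point is the orthogonality of $\partial_tu$ and $\nabla p$, which requires $\partial_tu\cdot n=0$ in the trace sense.
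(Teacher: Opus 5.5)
Your proposal is correct and follows essentially the same route as the paper. Both arguments test the vorticity equation with $\omega-h$ and use \eqref{eq:1.3} to kill the $h\,\frac{d}{dt}\int_\Omega\omega\,dx$ term. Both then apply the div--curl estimates, use the orthogonality $\int_\Omega \partial_t u\cdot\nabla p\,dx=0$, and bound $h$ through Poincar\'e applied to $\omega-h\in H^1_0(\Omega)$.

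The differences are cosmetic. You bound $u\cdot\nabla u$ with Ladyzhenskaya and use $|\Delta u|=|\nabla\omega|$. The paper instead uses $\|u\|_{L^\infty}\lesssim\|u\|_{H^2}$ together with $\|\nabla u\|_{L^2}\lesssim\|\omega_0\|_{L^2}$ and the $H^2$ elliptic estimate. Both give the same bounds.
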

\begin{proof}
Applying the curl operator to the momentum equation \eqref{eq:1.1}, we have
\begin{equation*}
\pd{t} \omega + u \cdot \nabla \omega 
+ \omega \divv u - \Delta \omega = 0 . 
\end{equation*}
This equation and the divergence-free condition on $u$ yield the vorticity transport equation:
\begin{equation}\label{eq:1.6}
\pd{t} \omega + u \cdot \nabla \omega - \Delta \omega = 0.
\end{equation}
Since the boundary vorticity $h(t)$ is independent of the spatial variable $x$, we can rewrite \eqref{eq:1.6} as
\begin{equation}\label{eq:1.7}
\pd{t} \omega + u \cdot \nabla(\omega - h) - \Delta(\omega - h) = 0.
\end{equation}
Testing the equation \eqref{eq:1.7} with $\omega - h$, 
we have  
{
\begin{equation}\label{eq:002}
\int_{\Omega} ( \omega - h ) ( \pd{t} \omega ) \dx 
+ \int_{\Omega} ( \omega - h ) u \cdot \nabla ( \omega - h ) \dx 
- \int_{\Omega} ( \omega - h ) \Delta ( \omega -h ) \dx = 0 .
\end{equation}
As for the first term of \eqref{eq:002}, we have 
\[
\int_{\Omega} ( \pd{t} \omega )  h \dx
= \left( \frac{d}{d t} \int_{\Omega} \omega \dx \right) h 
= \frac{d}{d t} \bigl( L | \Omega | \bigr) h = 0 \cdot h = 0  
\]
by using \eqref{eq:1.3}. In the second term of \eqref{eq:002}, 
by integration by parts, and using \eqref{eq:1.2} and the divergence-free condition on $u$, we have 
\begin{align}
\int_{\Omega} ( \omega - h ) u \cdot \nabla ( \omega - h ) \dx 
& = \frac{1}{2} \int_{\Omega} 
  u\cdot \nabla ( \omega - h )^2\dx \notag \\
& = \frac{1}{2} \left(
\int_{\partial \Omega} u \cdot n ( \omega - h )^{2} \, d \sigma
- \int_{\Omega} ( \divv u ) ( \omega - h )^{2} \dx
\right)= 0 . \label{eq:009}
\end{align}
As for the third term, 
by integration by parts, and using the second condition in \eqref{eq:1.2}
and the independence of $h (t)$ with respect to $x$, we have 
\[
\int_{\Omega} ( \omega - h ) \Delta ( \omega -h ) \dx 
= \int_{\partial \Omega}
 ( \omega - h ) n \cdot \nabla ( \omega - h ) \, 
d \sigma 
- \int_{\Omega}|\nabla(\omega - h)|^2 \dx 
= - \int_{\Omega}|\nabla \omega|^2 \dx . 
\]
Summing up,} 
we have derived the key energy identity for the vorticity field:
\begin{equation}\label{eq:1.9}
\frac{1}{2} \od{t} \int_{\Omega} \omega^2 \dx 
+ \int_{\Omega}|\nabla \omega|^2 \dx = 0.
\end{equation}
{Integrating \eqref{eq:1.9} with respect to $t$, we have 
\begin{equation}\label{eq:003}
\frac{1}{2} \int_{\Omega} \omega^{2}(t) \dx 
+ \int_{0}^{t} \int_{\Omega} | \nabla \omega |^{2} \dx \dt 
= \frac{1}{2} \int_{\Omega} \omega_{0}^{2} \dx , 
\end{equation}
which yields the estimates \eqref{eq:lemma11}. 
}%
Since $\Omega$ is a bounded simply connected domain and $u$ satisfies the divergence-free condition and the first condition of \eqref{eq:1.2}, by virtue of the classical inequalities in \cite{ref4,ref5}, we obtain the following regularity estimates relating the velocity field and the vorticity field:
{
\begin{align}
& \norm{u (t)}_{\Hone(\Omega)} 
\lesssim \norm{\omega (t)}_{\Ltwo(\Omega)},
\label{eq:1.10t} \\
& \norm{u (t)}_{\Htwo(\Omega)} 
\lesssim \norm{\omega (t)}_{\Hone(\Omega)} .
\label{eq:1.11t}
\end{align}
By \eqref{eq:1.10t} and \eqref{eq:003}, we have the estimate 
\begin{equation}\label{eq:004}
\sup_{0 \leq t \leq T} \norm{u (t)}_{\Hone(\Omega)} 
\lesssim \sup_{0 \leq t \leq T} \norm{\omega (t)}_{\Ltwo(\Omega)} 
\leq \norm{\omega_{0}}_{\Ltwo(\Omega)} . 
\end{equation}
In the same way, by \eqref{eq:1.11t} and \eqref{eq:003}, 
we have 
\begin{align}
\norm{u}_{\Ltwo(0,T;\Htwo(\Omega))}^{2}
& = \int_{0}^{T} \norm{u (t)}_{\Htwo(\Omega)}^{2} \dt 
\lesssim \int_{0}^{T} \norm{\omega (t)}_{\Hone(\Omega)}^{2} \dt 
\notag \\
& = \int_{0}^{T} 
 \norm{\omega (t)}_{\Ltwo(\Omega)}^{2} 
\dt 
+ \int_{0}^{T} 
 \norm{\nabla \omega (t)}_{\Ltwo(\Omega)}^{2} 
\dt \notag \\
& \leq \int_{0}^{T} 
 \norm{\omega_{0}}_{\Ltwo(\Omega)}^{2} 
\dt 
+ \frac{1}{2} \norm{\omega_{0}}_{\Ltwo(\Omega)}^{2} 
= \left( T + \frac{1}{2} \right) 
\norm{\omega_{0}}_{\Ltwo(\Omega)}^{2} . 
\label{eq:005}
\end{align}
By the estimates \eqref{eq:004} and \eqref{eq:005}, 
we obtain \eqref{eq:lemma12}. 
}%

From \eqref{eq:1.1}, we obtain 
$$\partial_t u +\nabla p=\Delta u-u\cdot \nabla u,$$
this implies
$$\int_0^T \int_\Omega |\partial_t u +\nabla p|^2 \dx\dt = \int_0^T \int_\Omega |\Delta u-u\cdot \nabla u|^2 \dx\dt.$$
On the other hand, using $\mathrm{div} u=0$ in $\Omega$ and $u\cdot n=0$ on $\partial \Omega$, we derive the identity
\begin{equation}\label{eq:1.13}
\int_{\Omega} \nabla p \cdot \pd{t} u \, \dx= \int_{\Omega} 
 \bigl( \divv ( p \pd{t} u ) - p \pd{t} ( \divv u ) \bigr) 
\dx= \int_{\Omega} \divv\left(p \pd{t} u\right) \dx = 0.
\end{equation}
As a consequence, we have 
the time derivative of the velocity field
and the pressure gradient
\begin{align*}
\int_0^T \int_\Omega \bigl( |\partial_t u|^2 +|\nabla p|^2 \bigr) \dx\dt
& = \int_0^T \int_\Omega |\partial_t u +\nabla p|^2 \dx \dt
= \int_0^T \int_\Omega |\Delta u-u\cdot \nabla u|^2 \dx \dt \\
& \leq 2 \left( 
 \int_{0}^{T} \int_{\Omega}|u \cdot \nabla u|^2\dx\dt + \int_{0}^{T} \int_{\Omega}|\Delta u|^2\dx\dt 
\right) .
\end{align*}
We now estimate the two terms on the right-hand side of these estimates.
By the Sobolev embedding theorem, we have 
\[
\norm{u}_{L^{\infty} ( \Omega )} 
\lesssim \norm{u}_{\Htwo ( \Omega )} . 
\]
Hence, by \eqref{eq:004} and \eqref{eq:005}, we obtain 
\begin{align*}
\int_{0}^{T} \int_{\Omega} | u \cdot \nabla u |^{2} \dx \dt 
& \leq \int_{0}^{T} \norm{u}_{L^{\infty} ( \Omega )}^{2} 
\int_{\Omega} | \nabla u |^{2} \dx \dt 
\lesssim \int_{0}^{T} \norm{u}_{\Htwo ( \Omega )}^{2} 
\int_{\Omega} | \nabla u |^{2} \dx \dt \notag \\
& \lesssim \int_{0}^{T} \norm{u}_{\Htwo ( \Omega )}^{2} \dt 
\norm{\omega_{0}}_{\Ltwo ( \Omega )}^{2} 
\lesssim \max \{ T, 1 \} 
\norm{\omega_{0}}_{\Ltwo ( \Omega )}^{4} . 
\end{align*}
On the other hand, by \eqref{eq:005} we have 
\begin{equation*}
\int_{0}^{T} \int_{\Omega} | \Delta u |^{2} \dx \dt 
\lesssim \max \{ T, 1 \} 
\norm{\omega_{0}}_{\Ltwo ( \Omega )}^{2} . 
\end{equation*}
We therefore conclude that the estimate \eqref{eq:lemma13} holds.

Finally, we establish the $\Ltwo(0,T)$ regularity for the boundary vorticity $h(t)$ through a series of embedding and energy estimates:
\[
\int_{0}^{T} h^{2} (t) \dt 
= \frac{1}{| \Omega |} \int_{0}^{T} \int_{\Omega} h^{2} (t) \dx \dt 
\leq \frac{2}{| \Omega |} 
\left(
 \int_{0}^{T} \int_{\Omega} \omega^{2} \dx \dt 
 + \int_{0}^{T} \int_{\Omega} ( \omega - h )^{2} \dx \dt 
\right) . 
\]
On the second term of the right-hand side, 
by the second condition of \eqref{eq:1.2} 
and the Poincar{\'e} inequality, we have 
\[
\int_{\Omega} ( \omega - h )^{2} \dx 
\lesssim \int_{\Omega} | \nabla ( \omega - h ) |^{2} \dx .
\]
Together with \eqref{eq:003}, we have 
\begin{align*}
\int_{0}^{T} h^{2} (t) \dt 
& \lesssim \int_{0}^{T} \int_{\Omega} \omega^{2} \dx \dt 
+ \int_{0}^{T} \int_{\Omega} | \nabla ( \omega - h ) |^{2} \dx \dt \\
& = \int_{0}^{T} \int_{\Omega} \omega^{2} \dx \dt 
+ \int_{0}^{T} \int_{\Omega} | \nabla \omega |^{2} \dx \dt 
\leq \left( T + \frac{1}{2} \right)
\norm{\omega_{0}}_{\Ltwo ( \Omega )}^{2} .
\end{align*}
Hence, we have obtained the estimate \eqref{eq:lemma14}.
\end{proof}
\begin{lemma}\label{lemma:2}
Under the same assumption of Lemma~\ref{lemma:1}, 
the estimate
\begin{equation}\label{eq:lemma2}
\int_{0}^{T} \norm{\omega (t)}_{L^{4} ( \Omega )}^{2} \dt 
\lesssim \max\{ T, 1 \} \norm{\omega_{0}}_{\Ltwo ( \Omega )}^{2} 
\end{equation}
holds. 
\end{lemma}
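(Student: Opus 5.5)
The plan is to combine a two-dimensional Sobolev/Gagliardo--Nirenberg inequality with the energy bounds \eqref{eq:lemma11} of Lemma~\ref{lemma:1}. No interpolation is really needed, because the time integral of $\norm{\omega}_{\Hone(\Omega)}^{2}$ is already controlled.

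First I will use the embedding $\Hone(\Omega) \hookrightarrow L^{4}(\Omega)$, valid in two dimensions for a bounded smooth domain. It gives, for a.e.\ $t \in (0,T)$,
\[
\norm{\omega(t)}_{L^{4}(\Omega)}^{2} \lesssim \norm{\omega(t)}_{\Ltwo(\Omega)}^{2} + \norm{\nabla \omega(t)}_{\Ltwo(\Omega)}^{2} .
\]
This is legitimate because $\omega \in \Ltwo(0,T;\Hone(\Omega))$, which follows from $u \in \Ltwo(0,T;\Htwo(\Omega))$. If one prefers to track the structure, the Ladyzhenskaya form $\norm{\omega}_{L^{4}}^{2} \lesssim \norm{\omega}_{\Ltwo}\norm{\omega}_{\Hone}$ followed by Young's inequality reduces to the same bound.

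Next I will integrate over $(0,T)$ and invoke \eqref{eq:003}, equivalently \eqref{eq:lemma11}. The first term is bounded by $T\norm{\omega_{0}}_{\Ltwo(\Omega)}^{2}$ and the second by $\frac{1}{2}\norm{\omega_{0}}_{\Ltwo(\Omega)}^{2}$. Together these give
\[
\int_{0}^{T} \norm{\omega(t)}_{L^{4}(\Omega)}^{2} \dt \lesssim \left(T+\tfrac12\right)\norm{\omega_{0}}_{\Ltwo(\Omega)}^{2} \lesssim \max\{T,1\}\norm{\omega_{0}}_{\Ltwo(\Omega)}^{2},
\]
which is \eqref{eq:lemma2}. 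The computation is the same as in \eqref{eq:005}.

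There is no serious obstacle. The only point needing care is that the embedding constant depends only on $\Omega$, as required by the convention for $\lesssim$. We should not apply a homogeneous inequality $\norm{\omega}_{L^4}\lesssim\norm{\nabla\omega}_{\Ltwo}$ directly, because $\omega$ has neither zero mean nor zero trace. One could instead apply it to $\omega-h$ and control $h$ via \eqref{eq:lemma14}, but the inhomogeneous $\Hone$ embedding avoids this detour.
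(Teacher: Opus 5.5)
Your proposal is correct and follows essentially the same route as the paper. You apply the embedding $\Hone(\Omega)\hookrightarrow L^{4}(\Omega)$ for each $t$, then use \eqref{eq:003} to bound $\int_0^T\bigl(\norm{\omega}_{\Ltwo(\Omega)}^2+\norm{\nabla\omega}_{\Ltwo(\Omega)}^2\bigr)\dt$ by $\left(T+\frac12\right)\norm{\omega_0}_{\Ltwo(\Omega)}^2$, which is exactly the paper's argument.
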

\begin{proof}
By the Sobolev embedding theorem, 
we have 
\[
\norm{\omega (t)}_{L^{4} ( \Omega )}^{2}
\lesssim \norm{\omega (t)}_{\Hone ( \Omega )}^{2} . 
\]
Then by using \eqref{eq:lemma11} we have 
\begin{align*}
\int_{0}^{T} \norm{\omega (t)}_{L^{4} ( \Omega )}^{2} \dt
\lesssim \int_{0}^{T} \norm{\omega (t)}_{\Hone ( \Omega )}^{2} \dt
& = \int_{0}^{T} \norm{\omega (t)}_{\Ltwo ( \Omega )}^{2} \dt
+ \int_{0}^{T} \norm{\nabla \omega (t)}_{\Ltwo ( \Omega )}^{2} \dt \\
& \leq 
\left( T + \frac{1}{2} \right) 
\norm{\omega_{0}}_{\Ltwo ( \Omega )}^{2} , 
\end{align*}
which means \eqref{eq:lemma2}. 
\end{proof}
This completes the derivation of all necessary preliminary estimates for the proof of Theorem \ref{thm:1.1}. Before closing this section, we give the following remark on the large time behavior of the vorticity field.
\begin{remark}
By the Poincar\'e inequality, we have
\[
\norm{\omega - \frac{1}{|\Omega|} \int_{\Omega} \omega \dx}_{\Ltwo(\Omega)} 
\leq C \norm{\nabla \omega}_{\Ltwo(\Omega)} ,
\]
where a positive constant $C$ depends only on $\Omega$. Using this estimate, the energy identity \eqref{eq:1.9} and the identity
\[
\od{t} \int_{\Omega} \omega^2 \dx = \od{t} \int_{\Omega}\left(\omega - \frac{1}{|\Omega|} \int_{\Omega} \omega \dx\right)^2 \dx , 
\]
we have
\[
\od{t} \norm{\omega - \frac{1}{|\Omega|} \int_{\Omega} \omega \dx}_{\Ltwo(\Omega)}^{2}
\leq - \frac{2}{C^{2}}
\norm{\omega - \frac{1}{|\Omega|} \int_{\Omega} \omega \dx}_{\Ltwo(\Omega)}^{2} . 
\] 
It follows that the vorticity field decays exponentially in time to the centered vorticity field (the spatial average of the vorticity field):
\[
\norm{\omega - \frac{1}{|\Omega|} \int_{\Omega} \omega \dx}_{\Ltwo(\Omega)}
\leq \norm{\omega_{0} - \frac{1}{|\Omega|} \int_{\Omega} \omega_{0} \dx}_{\Ltwo(\Omega)}
e^{-\lambda_0 t} , 
\]
where $\lambda_0 = 1 / C^2$. 
\end{remark}

\section{Proof of Theorem \ref{thm:1.1}}
This section is devoted to proving Theorem \ref{thm:1.1}, the main result regarding the uniqueness and conditional stability presented in this paper.

Let $(u_i, p_i, h_i)$ be the strong solutions to the system \eqref{eq:1.1}--\eqref{eq:1.3} corresponding to the input data $(u_{0i}, L_i)$ for $i=1,2$. From the vorticity equation \eqref{eq:1.6}, we obtain 
\begin{equation}\label{eq:2.20}
\pd{t}(\omega_1 - \omega_2) + u_1\cdot \nabla (\omega_1 - \omega_2) + (u_1 - u_2)\cdot \nabla \omega_2 - \Delta (\omega_1 - \omega_2) = 0.
\end{equation}
Since $h_i(t)\ (i=1,2)$ is independent of the spatial variable $x$, we rewrite \eqref{eq:2.20} in terms of the difference $\left(\omega_1 - \omega_2\right) - \left(h_1 - h_2\right)$:
\begin{equation*}
\pd{t}(\omega_1 - \omega_2) + u_1 \cdot \nabla\left[\left(\omega_1 - \omega_2\right)-\left(h_1 - h_2\right)\right] + \left(u_1 - u_2\right) \cdot \nabla \omega_2 - \Delta\left[\left(\omega_1 - \omega_2\right)-\left(h_1 - h_2\right)\right] = 0.
\end{equation*}
We now test the above equation with $(\omega_1 - \omega_2)-(h_1 - h_2)$. 
As for the first term of the left-hand side, we obtain the key identity
\begin{align*}
\int_{\Omega} 
 \pd{t} ( \omega_1 - \omega_2 ) \cdot ( h_1 - h_2 ) 
\dx 
& = ( h_1 - h_2 ) 
\od{t} \int_{\Omega} ( \omega_1 - \omega_2 ) \dx \\
& = ( h_1 - h_2 ) \od{t}
\bigl( | \Omega | L_{1} - | \Omega | L_{2} \bigr) 
= 0,
\end{align*}
which holds due to the global vorticity invariant constraint \eqref{eq:1.3}. 
As for the second term, 
we proceed in the same way as we derived the identity \eqref{eq:009}.  
By integrating by parts, and using the first condition in \eqref{eq:1.2} and the divergence-free condition on $u_{i}$, we have 
\begin{align*}
& \int_{\Omega} 
 \bigl( u_1 \cdot \nabla 
 \left[ \left( \omega_1 - \omega_2 \right ) - \left( h_1 - h_2 \right) \right]
 \Bigr) 
 \left[ \left( \omega_1 - \omega_2 \right) - \left( h_1 - h_2 \right) \right] 
\dx \\
& = \frac{1}{2} 
\int_{\Omega} 
  u_1 \cdot \nabla 
 \left[ \left( \omega_1 - \omega_2 \right ) - \left( h_1 - h_2 \right) \right]^{2}
\dx \\
& = \frac{1}{2} \left(
 \int_{\partial \Omega} 
  u_1 \cdot n 
 \left[ \left( \omega_1 - \omega_2 \right ) - \left( h_1 - h_2 \right) \right]^{2}
 \, d \sigma
- \int_{\Omega} 
 ( \divv u_1 )  
 \left[ \left( \omega_1 - \omega_2 \right ) - \left( h_1 - h_2 \right) \right]^{2}
\dx 
\right) = 0 .  
\end{align*}
As for the fourth term, 
by integration by parts, and using the second condition in \eqref{eq:1.2}
and the independence of $h_{i} (t)$ with respect to $x$, we have 
\begin{align*}
& \int_{\Omega} 
 \bigl( 
  \Delta \left[\left(\omega_1 - \omega_2\right)-\left(h_1 - h_2\right)\right] 
 \bigr) 
 \left[\left(\omega_1 - \omega_2\right)-\left(h_1 - h_2\right)\right]
d x \\
& = - \int_{\Omega} 
 \left|
  \nabla \left[\left(\omega_1 - \omega_2\right)-\left(h_1 - h_2\right)\right]
 \right|^{2}
\dx = - \int_{\Omega} 
 \left|
  \nabla ( \omega_1 - \omega_2 )
 \right|^{2}
\dx .
\end{align*}
Summing up, we have obtained
\begin{align}
& \frac{1}{2} \od{t} \int_{\Omega}\left(\omega_1 - \omega_2\right)^2 \dx + \int_{\Omega}\left|\nabla\left(\omega_1 - \omega_2\right)\right|^2 \dx 
\notag \\
& = -\int_{\Omega}
 (u_1 - u_2) \cdot \nabla 
 \omega_2\left[(\omega_1 - \omega_2)-(h_1 - h_2)\right] 
\dx . \label{eq:015}
\end{align}
We now estimate the right-hand side. 
We first apply integration by parts to the right-hand side. 
By the second condition in \eqref{eq:1.2}, 
the divergence-free condition on $u_{i}$ 
and the independence of $h_{i} (t)$ with respect to $x$, we have
\begin{align}
& \mbox{$-$} \int_{\Omega} 
 ( ( u_{1} - u_{2} ) \cdot \nabla \omega_{2} )
 \left[\left(\omega_1 - \omega_2\right)-\left(h_1 - h_2\right)\right] 
\dx \notag \\
& = \int_{\Omega} 
 \omega_{2} 
 \divv \bigl( 
  ( u_{1} - u_{2} )  
  \left[\left(\omega_1 - \omega_2\right)-\left(h_1 - h_2\right)\right] 
 \bigr) 
\dx \notag \\
& = \int_{\Omega} 
 \omega_{2} 
  \divv ( u_{1} - u_{2} )  
  \left[\left(\omega_1 - \omega_2\right)-\left(h_1 - h_2\right)\right] 
\dx 
+ \int_{\Omega} 
  ( u_{1} - u_{2} ) \omega_{2} \cdot 
  \nabla \left[\left(\omega_1 - \omega_2\right)-\left(h_1 - h_2\right)\right] 
\dx \notag \\
& = \int_{\Omega} 
  ( u_{1} - u_{2} ) \omega_{2} \cdot 
  \nabla \left(\omega_1 - \omega_2\right)
\dx . \label{eq:016}
\end{align}
Applying H\"{o}lder's inequality, 
we have
\begin{equation}\label{eq:017}
\int_{\Omega} 
 ( u_1 - u_2 ) \omega_2 \cdot \nabla (\omega_1 - \omega_2 ) \dx 
\leq \norm{u_1 - u_2}_{L^4(\Omega)} 
\norm{\omega_2}_{L^4(\Omega)} 
\norm{\nabla (\omega_1 - \omega_2 )}_{\Ltwo(\Omega)} . 
\end{equation}
Since $u_1 - u_2$ is divergence-free and 
$( u_1 - u_2 ) \cdot n = 0$ on $\Omega$, we have
\begin{equation}\label{eq:018}
\norm{u_1 - u_2}_{L^4(\Omega)} 
\leq \mbox{{$\Cone$}} \norm{\omega_1 - \omega_2}_{\Ltwo(\Omega)} 
\end{equation}
by the Sobolev embedding theorem,
where {$\Cone$} is a positive constant 
depending only on $\Omega$. 
Then, by \eqref{eq:015}, \eqref{eq:016}, 
\eqref{eq:017} and \eqref{eq:018}, we have 
\begin{align*}
\frac{1}{2} \frac{d}{d t} 
\int_{\Omega} ( \omega_{1} - \omega_{2} )^{2} \dx 
+ \int_{\Omega} 
 \left|
  \nabla ( \omega_1 - \omega_2 )
 \right|^{2}
\dx 
& \leq \norm{u_1 - u_2}_{L^4(\Omega)} 
\norm{\omega_2}_{L^4(\Omega)} 
\norm{\nabla\left(\omega_1 - \omega_2\right)}_{\Ltwo(\Omega)} \\
& \leq \Cone \norm{\omega_1 - \omega_2}_{L^2(\Omega)} 
\norm{\omega_2}_{L^4(\Omega)} 
\norm{\nabla\left(\omega_1 - \omega_2\right)}_{\Ltwo(\Omega)} \\
& \leq \frac{1}{2} 
\norm{\nabla\left(\omega_1 - \omega_2\right)}_{\Ltwo(\Omega)}^{2}
+ \frac{\Cone^{2}}{2} 
\norm{\omega_1 - \omega_2}_{L^2(\Omega)}^{2}
\norm{\omega_2}_{L^4(\Omega)}^{2} , 
\end{align*}
that is, 
\begin{equation}\label{eq:010}
\frac{d}{d t} 
\int_{\Omega} ( \omega_{1} - \omega_{2} )^{2} \dx 
+ \norm{\nabla\left(\omega_1 - \omega_2\right)}_{\Ltwo(\Omega)}^{2}
\leq \Cone^{2} \norm{\omega_2}_{L^4(\Omega)}^{2}
\int_{\Omega} ( \omega_{1} - \omega_{2} )^{2} \dx . 
\end{equation}
Ignoring the second term of the left-hand side, we have 
\[
\int_{\Omega} ( \omega_{1} - \omega_{2} )^{2} \dx 
\leq \int_{\Omega} ( \omega_{01} - \omega_{02} )^{2} \dx 
\exp \left(
 \Cone^{2} \int_{0}^{t} \norm{\omega_{2} (s)}_{L^{4} ( \Omega )}^{2} \, d s 
\right) . 
\]
Taking the supremum over the time interval $(0,T)$, we have 
\[
\norm{\omega_{1} - \omega_{2}}_{L^{\infty} (0,T; \Ltwo ( \Omega ) )}^{2}
\leq \exp \left(
 \Cone^{2} \int_{0}^{T} \norm{\omega_{2} (t)}_{L^{4} ( \Omega )}^{2} \, d t
\right)
\norm{\omega_{01} - \omega_{02}}_{\Ltwo ( \Omega )}^{2} . 
\]
Here, by Lemma~\ref{lemma:2}, we have the estimate 
\begin{equation}\label{eq:012}
\int_{0}^{T} \norm{\omega_{2} (t)}_{L^{4} ( \Omega )}^{2} \, d t
\leq \Ctwo \max \{T, 1\} \norm{\omega_{02}}_{\Ltwo ( \Omega )}^{2} 
\leq \Ctwo \max \{T, 1\} M^{2},
\end{equation}
where the positive constant $\Ctwo$ depends only on $\Omega$. 
Then we have
\begin{equation}\label{eq:011}
\norm{\omega_{1} - \omega_{2}}_{L^{\infty} (0,T; \Ltwo ( \Omega ) )}^{2}
\leq \exp \left(
 \Cone^{2} \Ctwo \max \{T, 1\} M^{2}
\right)
\norm{\omega_{01} - \omega_{02}}_{\Ltwo ( \Omega )}^{2} . 
\end{equation}
On the other hand, integrating \eqref{eq:010} over the time interval $(0,T)$, 
we have 
{\[
\norm{\nabla (\omega_1 - \omega_2 )}_{\Ltwo (0,T;\Ltwo(\Omega))}^{2}
\leq \Cone^{2} \int_{0}^{T} \norm{\omega_2}_{L^4(\Omega)}^{2}
\norm{\omega_{1} - \omega_{2}}_{\Ltwo (\Omega)}^{2} \dt 
+ \norm{\omega_{01} - \omega_{02}}_{\Ltwo (\Omega)}^{2} . 
\]
Here, for the first term of the right-hand side, 
we have 
\begin{align*}
\int_{0}^{T} \norm{\omega_2}_{L^4(\Omega)}^{2}
\norm{\omega_{1} - \omega_{2}}_{\Ltwo (\Omega)}^{2} \dt 
& \leq 
\exp \left(
 \Cone^{2} \Ctwo \max \{T, 1\} M^{2}
\right)
\norm{\omega_{01} - \omega_{02}}_{\Ltwo ( \Omega )}^{2} 
\int_{0}^{T} \norm{\omega_2}_{L^4(\Omega)}^{2} \dt \\
& \leq 
\Ctwo \max \{T, 1\} M^{2} 
\exp \left(
 \Cone^{2} \Ctwo \max \{T, 1\} M^{2}
\right)
\norm{\omega_{01} - \omega_{02}}_{\Ltwo ( \Omega )}^{2} 
\end{align*}
by using \eqref{eq:011} and \eqref{eq:012}.
Hence, we have 
\begin{equation}\label{eq:013}
\norm{\nabla (\omega_1 - \omega_2 )}_{\Ltwo (0,T;\Ltwo(\Omega))}^{2}
\leq \Cthree \norm{\omega_{01} - \omega_{02}}_{\Ltwo ( \Omega )}^{2} , 
\end{equation}
where  
\begin{math}
\Cthree: = \Cone^{2} \Ctwo \max \{T, 1\} M^{2} 
\exp \left(
 \Cone^{2} \Ctwo \max \{T, 1\} M^{2}
\right)
+ 1
\end{math}. 
}

Next, we provide a stability estimate 
for the boundary vorticity difference $h_1 - h_2$. 
To begin with, we have
\begin{align*}
\int_{0}^{T} (h_1 - h_2)^2 \dt 
&= \frac{1}{|\Omega|} 
\int_{0}^{T} \int_{\Omega} (h_1 - h_2 )^2 \dx \dt \\
& \mbox{{\begin{math}\displaystyle
\mbox{} \leq \frac{2}{| \Omega |} \left( 
 \int_{0}^{T} \int_{\Omega} (\omega_1 - \omega_2 )^2\dx\dt 
 + \int_{0}^{T} \int_{\Omega}
  [ (\omega_1 - \omega_2 )- ( h_1 - h_2 )]^2
 \dx \dt 
\right) . 
\end{math}}}
\end{align*}
Here, as for the second term, we have
\[
\int_{\Omega}
 [ (\omega_1 - \omega_2 )- ( h_1 - h_2 )]^2
\dx 
\lesssim \int_{\Omega} 
 | \nabla [ (\omega_1 - \omega_2 )- ( h_1 - h_2 )] |^{2}
\dx 
\]
by the Poincar\'e inequality, due to the second condition of \eqref{eq:1.2}, it yields 
$( \omega_{1} - \omega_{2} ) - ( h_{1} - h_{2} ) = 0$ on $\partial \Omega$.
By these estimates and each $h_{i} (t)$ is independent of $x$, we have
\begin{equation}\label{eq:014}
\int_{0}^{T} (h_1 - h_2)^2 \dt 
\lesssim 
\int_{0}^{T} \int_{\Omega} (\omega_1 - \omega_2 )^2\dx\dt 
+ \int_{0}^{T} \int_{\Omega}
 | \nabla (\omega_1 - \omega_2 ) |^{2}
\dx \dt . 
\end{equation}
Combining 
{\eqref{eq:011}, \eqref{eq:013} 
and \eqref{eq:014},} 
we obtain the desired stability estimate 
\begin{equation}\label{eq:1.5}
\norm{\omega_1 - \omega_2}_{\Linf(0,T;\Ltwo(\Omega))} +\norm{\nabla (\omega_1 - \omega_2 )}_{\Ltwo (0,T;\Ltwo(\Omega))}+ \norm{h_1 - h_2}_{\Ltwo(0,T)} \leq \Cfour \norm{\omega_{01} - \omega_{02}}_{\Ltwo(\Omega)}
\end{equation}
for any $T>0$, where $\Cfour$ depends only on $\Omega, T$ and $M$. 

Note that the difference of the velocity fields satisfies $\divv ( u_{1} - u_{2} ) = 0$ in $\Omega$ and $( u_{1} - u_{2} ) \cdot n = 0$ on $\partial\Omega$. Then, applying the elliptic regularity estimate (2.11) in the proof of  Lemma 2.1, we have
\begin{equation*}
\|(u_1 - u_2)(t)\|_{H^1(\Omega)} \lesssim \|(\omega_1 - \omega_2)(t)\|_{L^2(\Omega)}.
\end{equation*}
Taking the supremum over $t$ on both sides, and using the first term on the left-hand side of \eqref{eq:1.5}, we have
\begin{equation} \label{eq:u_bound}
\|u_1 - u_2\|_{L^\infty(0,T; H^1(\Omega))} 
\leq \|\omega_1 - \omega_2\|_{L^\infty(0,T; L^2(\Omega))} 
\leq \Cfour \norm{\omega_{01} - \omega_{02}}_{\Ltwo(\Omega)} .
\end{equation}
Subtracting the momentum equations corresponding to $(u_1, p_1)$ and $(u_2, p_2)$ yields
\begin{equation} \label{eq:momentum_diff}
\partial_t (u_{1}-u_{2}) + \nabla(p_1 - p_2) = \Delta (u_{1}-u_{2}) - (u_1 \cdot \nabla u_1 - u_2 \cdot \nabla u_2).
\end{equation}
Taking the $L^2$ norm of equation \eqref{eq:momentum_diff} over $\Omega \times (0,T)$, utilizing the fact that
\begin{equation*}
\int_\Omega \partial_t (u_{1}-u_{2}) \cdot \nabla(p_1 - p_2) \, dx = 0
\end{equation*}
holds by the same argument as for \eqref{eq:1.13}, we obtain
\begin{equation*}
\int_0^T \int_\Omega 
 \bigl( |\partial_t (u_{1}-u_{2})|^2 + |\nabla(p_1 - p_2)|^2 \bigr) 
\dx \dt 
= \int_0^T \int_\Omega 
 |\Delta (u_{1}-u_{2}) - (u_1 \cdot \nabla u_1 - u_2 \cdot \nabla u_2)|^2 
\dx \dt.
\end{equation*}
This directly bounds the pressure gradient as follows
\begin{equation*}
\|\nabla(p_1 - p_2)\|_{L^2(0,T; L^2(\Omega))} 
\leq \|\Delta (u_{1}-u_{2})\|_{L^2(0,T; L^2(\Omega))} 
+ \|u_1 \cdot \nabla u_1 - u_2 \cdot \nabla u_2\|_{L^2(0,T; L^2(\Omega))}.
\end{equation*}
By the higher-order elliptic estimate \eqref{eq:1.11t}, we have $\|(u_{1}-u_{2})(t)\|_{H^2(\Omega)} \lesssim \|( \omega_{1} - \omega_{2} ) (t)\|_{H^1(\Omega)}$. Then, together with this estimate and \eqref{eq:1.5}, we have
\begin{align}
\|\Delta (u_{1}-u_{2})\|_{L^2(0,T; L^2(\Omega))} 
& \leq \|u_{1}-u_{2}\|_{L^2(0,T; H^2(\Omega))} 
\lesssim \|\omega_1 - \omega_2\|_{L^2(0,T; H^1(\Omega))} \notag \\
& \leq \Cfour \sqrt{T+1} 
\norm{\omega_{01} - \omega_{02}}_{\Ltwo(\Omega)} .
\label{eq:deltav}
\end{align}

Now, rewrite the nonlinear term as
\begin{equation*}
u_1 \cdot \nabla u_1 - u_2 \cdot \nabla u_2 
= u_1 \cdot \nabla ( u_{1} - u_{2} ) + ( u_{1} - u_{2} ) \cdot \nabla u_2.
\end{equation*}
Then, apply H\"{o}lder's inequality and the Sobolev embeddings, 
we estimate their $L^2(\Omega)$ norms as
\begin{align*}
\|u_1 \cdot \nabla ( u_{1} - u_{2} )\|_{L^2(\Omega)} 
&\leq \|u_1\|_{L^\infty(\Omega)} \|\nabla u_{1} - u_{2} \|_{L^2(\Omega)} 
\lesssim \|u_1\|_{H^2(\Omega)} \| u_{1} - u_{2} \|_{H^1(\Omega)}, \\
\|u_{1} - u_{2}  \cdot \nabla u_2\|_{L^2(\Omega)} 
&\leq \| u_{1} - u_{2} \|_{L^4(\Omega)} \|\nabla u_2\|_{L^4(\Omega)} 
\lesssim \| u_{1} - u_{2} \|_{H^1(\Omega)} \|u_2\|_{H^2(\Omega)}.
\end{align*}
Integrating the square of these norms over $(0,T)$, we have
\begin{align*}
\int_0^T \|u_1 \cdot \nabla ( u_{1} - u_{2} )\|_{L^2(\Omega)}^2 \, dt 
&\lesssim  \| u_{1} - u_{2} \|_{L^\infty(0,T; H^1(\Omega))}^2 
\|u_1\|_{L^2(0,T; H^2(\Omega))}^2, \\
\int_0^T \|( u_{1} - u_{2} ) \cdot \nabla u_2\|_{L^2(\Omega)}^2 \, dt 
&\lesssim \| u_{1} - u_{2} \|_{L^\infty(0,T; H^1(\Omega))}^2 
\|u_2\|_{L^2(0,T; H^2(\Omega))}^2.
\end{align*}
Since we have the bounds
$\|u_i\|_{L^2(0,T; H^2(\Omega))} \lesssim \max\{ \sqrt{T} , 1 \} M$ for $i=1,2$ by Lemma~\ref{lemma:1}, 
we have
\begin{equation} \label{eq:pressure_grad_bound}
\|u_1 \cdot \nabla u_1 - u_2 \cdot \nabla u_2\|_{L^2(0,T; L^2(\Omega))}
\leq \Cfour \max \{ \sqrt{T} , 1 \} M 
\norm{\omega_{01} - \omega_{02}}_{\Ltwo(\Omega)}
\end{equation}
by using \eqref{eq:u_bound}.
Combining \eqref{eq:deltav} and \eqref{eq:pressure_grad_bound}, 
we obtain
\begin{equation*}
\|\nabla(p_1 - p_2)\|_{L^2(0,T; L^2(\Omega))} 
\leq \Cfive \norm{\omega_{01} - \omega_{02}}_{\Ltwo(\Omega)} , 
\end{equation*}%
where $\Cfive$ depends only on $\Omega$, $T$ and $M$.
Since $\int_\Omega (p_1 - p_2) \, dx = 0$, Poincar\'{e}'s inequality yields
\begin{equation*}
\|p_1 - p_2\|_{L^2(0,T; L^2(\Omega))} 
\lesssim \|\nabla(p_1 - p_2)\|_{L^2(0,T; L^2(\Omega))}.
\end{equation*}
Hence, combining these two estimates, we have the desired estimate given as
\begin{equation} \label{eq:pressure_final_bound}
\|p_1 - p_2\|_{L^2(0,T; H^1(\Omega))} 
\lesssim \Cfive \norm{\omega_{01} - \omega_{02}}_{\Ltwo(\Omega)} . 
\end{equation}

Summing \eqref{eq:u_bound}, \eqref{eq:pressure_final_bound} and the boundary vorticity bound $\|h_1 - h_2\|_{L^2(0,T)}$ obtained directly from \eqref{eq:1.5}, we finally obtain the estimate \eqref{eq:1.5t} in Theorem \ref{thm:1.1}. This completes the proof.

\section*{Acknowledgments}
Regarding financial supports, the second author was supported by the National Natural Science Foundation of China (No. 12241103), and the fourth author was partially supported by JSPS KAKENHI (Grant No. JP25K07076). 

\printbibliography

\end{document}